\newtheorem{lemma}{Lemma}[section]
\newtheorem{theorem}{Theorem}[section]
\newtheorem{corollary}{Corollary}[section]
\numberwithin{equation}{section}\allowdisplaybreaks
\def\leq{\leqslant}
\def\geq{\geqslant}
\begin{document}

\title{Standing wave solutions and instability for the Logarithmic Klein--Gordon equation}
\author
         {
           {Lijia Han\footnote{ E--mail address:
	 hljmath@ncepu.edu.cn (L. Han)},\ \ \ {Yue Qiu},\ \ \ {Xiaohong Wang}}\\
   {\small  North China Electric Power University, Beijing 102206, China}\\
         \date{}
         }
\maketitle

\begin{minipage}{13.5cm}
\footnotesize \bf Abstract. \rm In this paper, we study the standing wave solutions of Klein--Gordon equation with logarithmic nonlinearity.
 The existence of the standing wave solution related to the ground state $\phi_0(x)$ is obtained. Further, we prove the instability of  solutions around $\phi_0(x)$.

\vspace{10pt}


\bf Key words and phrases. \rm Logarithmic Klein--Gordon equations;  Standing wave solutions; Variational method; instability.

\end{minipage}

%
%
%
%
%

\section{Introduction}
\setcounter{section}{1}\setcounter{equation}{0} In this paper, we
study the Cauchy problem for the logarithmic Klein--Gordon equation given by
\begin{align}\label{1.1}
u_{t t}-\Delta u+u=|u|^{p-1} u \ln |u|^2,
\end{align}
with initial data
\begin{align}\label{1.2}
u(0, x)=u_0(x) \in H_r^1, \quad u_t(0, x)=u_1(x) \in L_r^2,
\end{align}
where, $H_r^1=\left\{\phi \mid\right.$ radially symmetric functions on ${R}^{3}\}$, $L_r^2=\left\{\phi \mid\right.$ radially symmetric functions on ${R}^{3}\}$, $u$ is a pair of real--valued functions of $(t, x)$ $\in {R} \times{R}^{3}$, $\Delta$ is the Laplacian on ${R}^{3}$, $2<p<4$.

Equation (1) has been applied in many branches of physics, such as nuclear physics [1], optics [2] and geophysics [3]. It is also introduced  in quantum field [4]. Logarithmic nonlinearity appears naturally in inflation cosmology and in supersymmetric field theories [5-7].
Equation (1) has conservation quantum $E(t)$ as
\begin{align}
E(t) & =\frac{1}{2} \int_{R^3}\left|u_t\right|^2 d x+\frac{1}{2} \int_{R^3}|\nabla u|^2 d x+\frac{1}{2} \int_{R^3}|u|^2 d x+\frac{2}{(p+1)^2} \int_{R^3}|u|^{p+1} d x \\
&\quad-\frac{1}{p+1} \int_{R^3}|u|^{p+1} \operatorname{In} |u|^2 d x.
\end{align}

There are many results about equation (1). When $p=1$, [1] and [3] proved the existence of classical solutions and weak solutions for the logarithmic Klein--Gordon  equation. By using Galerkin method and compactness criterion, [8] proved the existence of global solutions and the blow up of  solutions for the logarithmic Klein--Gordon equation. In [9], the authors studied two regularized finite difference methods for the logarithmic Klein--Gordon equation. In [10],  the authors studied the orbital stability of the standing wave for the logarithmic Klein--Gordon equation. In [11], the author considered the logarithmic Klein--Gordon equation with damping term $u_t$, and obtained the existence of  global solutions  by using the potential well method.  When $p>1$, [12] studied the existence of global solutions and finite--time blow up solutions for logarithmic Klein--Gordon equation. In [13], the author considered equation (1.5),
\begin{equation}\label{1.3.1}
u_{t t}-\Delta u+u=-|u|^{\frac{4}{n-2}} u \log ^\gamma\left(\log \left(10+|u|^2\right)\right),
\end{equation}
for $n= 3, 4, 5$,   $\gamma>0$ is a constant,  the blow up behavior and the global existence of solution were obtained. The logarithmic nonlinearity was also studied in many other equations, such as wave equations and so on. [14] considered the global existence and the finite time blow up of solutions for the semilinear logarithmic wave equation $u_{t t}-\Delta u=|u|^p \operatorname{In}|u|$,
where $1<p<\infty$ if $n=1,2$ and  $2<p<\frac{4}{n-2}$ if $n=3,4,5$.
[15] studied the energy decay estimates and infinite blow up phenomena for a strongly damped semilinear logarithmic wave equation,   $u_{t t}-\Delta u-\Delta u_t=u \log |u|^2$.    [16] concerned with the existence of positive solutions for  logarithmic elliptic equation,  $-\varepsilon^2 \Delta u+V(x) u=u \log |u|^2$, where  $\varepsilon>0$, $N \geq 3$ and $V$ is a continuous function with a global minimum.

In the past decades,  the classical Klein--Gordon system  ($p>1$)
\begin{align}\label{1.3}
& u_{t t}-\Delta u+u=|u|^{p-1} u, \quad(t, x) \in R \times R^N, \\
& u(x, 0)=u_0(x),  u_t(x, 0)=u_1(x), \quad x \in R^N,
\end{align}
had been widely studied by many researchers. For the results about  the existence of local and global solutions, blow up behavior  and scattering  behavior of (1.6), we could refer to [17-25] and the references therein.

 There are also some results about  the standing wave solutions of Klein--Gordon equation. [26] considered the  instability of  standing wave solutions for nonlinear Klein--Gordon equation (1.6). Especially, [27] studied the instability of standing wave solutions for nonlinear Klein--Gordon equation (1.6)  in the critical case in one dimension.
 [28] studied  the stability of the
standing waves of least energy for nonlinear Klein--Gordon equations (1.8),
\begin{align}\label{9.8}
 u_{t t}-\Delta u+u+f(|u|) \arg u=0, \quad x \in R^N, N>2,
\end{align}
where $f$ satisfy $\underset{\eta \rightarrow \infty}{lim} f(\eta) / \eta^l \geq 0$. By using variational  method,  the sufficient conditions of the minimum energy standing wave solutions stability were obtained
 for (1.8). Based on the method in [28], [30] considered the standing wave solutions of nonlinear Klein--Gordon equation (1.9),
\begin{align}\label{1.5.1}
u_{t t}-\Delta u+u=K(x)|u|^{\frac{N}{4}} u,
\end{align}
and discussed the instability of standing wave solutions by establishing some sharp conditions for  blow up and global existence of the solutions, where $K(x)>0$ is a given $C^1$ function such that there is $\frac{N^2}{4}<q<\infty$ such that $K(x) \in L^q\left(R^N\right)$. [31] studied the stability of standing wave solutions  for the coupled nonlinear Klein--Gordon  equation (1.10)
 \begin{equation}\label{9.1}
 \begin{cases}\phi_{t t}-\Delta \phi+\phi=(p+1)|\phi|^{p-1}|\psi|^{q+1} \phi, & t>0, x \in R^N, \\ \psi_{t t}-\Delta \psi+\psi=(q+1)|\psi|^{p-1}|\phi|^{q+1} \psi, & t>0, x \in R^N,\end{cases}
 \end{equation}
 where $(\phi, \psi)$ is a pair of real--valued functions, $N>2$, $0<p, q<\frac{2}{N-2}$ and $p+q<\frac{4}{N}$.
In [32],  the authors studied the instability of standing wave solutions for nonlinear Klein--Gordon equation with a damping term, $u_{t t}-\Delta u+m^2 u+\alpha u_t=|u|^{p-2} u$,
where $p>2$, $ m \neq 0$, $ \alpha>0$.
In [10],  the authors studied the orbital stability of standing wave solutions $u(x, t)=e^{i c t} e^{\frac{1}{2}+\frac{1-c^2}{p}} e^{-\frac{p x^2}{4}}$  for the logarithmic Klein--Gordon equation  (1.11),
\begin{align}\label{1.3.2}
u_{t t}-\Delta u+u=\mu \log \left(|u|^p\right) u,
\end{align}
where $c$ is the frequency, $p>0$, $\mu>0$, $u$ : $\mathbb{R} \times \mathbb{R} \rightarrow \mathbb{C}$  is a complex valued function.

As far as we know, there are no results about the existence and instability for the  standing wave solution related to ground state of logarithmic Klein--Gordon equation  (1.1). Study for the ground state of equation   (1.1) is very important. The ground state solution is the lowest energy state of a system in quantum mechanics. A solution is said to be a ground state if it has the least energy among all solutions.  From physical point of view, it is the most stable configuration of a quantum--mechanical system. Inspired by  [28-29], in this paper we study the existence for the  standing wave solution of equation  (1.1) related to ground state by using the variational method, Gagliardo--Nirenberg inequality, compactness theorem, Sobolev embedding theorem, stretching transformation and so on. Further,  we prove the instability of  solutions around the ground state.

 This paper is organized as follows. In Section 2, we state some preliminaries
 and the main results. In Section 3, we prove the existence of standing wave solutions of equation  (1.1) related to ground state  $\phi_0(x)$ by using the variational method. In Section 4, we prove the instability of  solutions around $\phi_0(x)$.

%
%
%
%
%

\section{Preliminaries and statement of main results}
\setcounter{section}{2}\setcounter{equation}{0}
\subsection{Notations}
We start by giving some definitions of  $H_1^r$ and $L_p^r$,
\begin{align*}
H_r^1=\left\{u \in H^1\left(R^3\right) ; u(x)=u(|x|)\right\}, \quad L_r^p=\left\{u \in L^p\left(R^3\right) ; u(x)=u(|x|), p>2\right\},
\end{align*}
which are radially symmetric spaces with the norm
\begin{align*}
&\|u\|_{H_r^1}^2=\int_{R^3}|\nabla u(x)|^2 d x +\int_{R^3}|u(x)|^2 d x<\infty, \\ &\|u\|_{L_r^p}^p=\int_{R^3}|u(x)|^p d x<\infty.
\end{align*}

Next, we introduce the Gagliardo--Nirenberg inequality in $R^N$, $N>2$,
\begin{align}
\|u\|_{L^r} \leq C\|\nabla u\|_{L^p}^\theta\|u\|_{L^q}^{1-\theta},
\end{align}
where $1<p<N$, $\theta \in[0,1]$, $\frac{1}{r}=\frac{\theta}{p^*}+\frac{1-\theta}{q}$, $p^*=\frac{N p}{N-p}$, $C$ is the best constant.
When $p=q=2$, $r=\alpha$, we get
\begin{align}\label{20.10}
\left\|\phi_k\right\|_{L^\alpha} \leq C_1\left\|\phi_k\right\|_{L^2}^{1-\frac{N(\alpha-2)}{2 \alpha}}\left\|\nabla \phi_k\right\|_{L^2}^{\frac{N(\alpha-2)}{2 \alpha}},
\end{align}
for 2  <$\alpha<\frac{2 N}{N-2}$.

In this paper, $C_i>0(i=1,2,3 \ldots, 20)$ is a constant. For $f(u)=o(u)$  means that  $|f(u) / u| \rightarrow 0$ as $|u| \rightarrow 0$, for $f(u)=O(u)$ means that $|f(u) / u|$  is bounded as $|u| \rightarrow 0$.

\subsection{The main results}
Consider the nonlinear logarithmic Klein--Gordon  equation  (1.1), which has nontrivial standing wave solutions
\begin{align*}
u(x, t)=e^{i \omega t} \phi(x),
\end{align*}
where $\phi(x)$  is a ground state solution of the equation
\begin{align}\label{2.1}
-\Delta \phi+\left(1-\omega^2\right) \phi=|\phi|^{p-1} \phi \operatorname{In} |\phi|^2.
\end{align}

If there is a solution $\phi(x)$ such that the functional $J_\omega(\phi)$ is minimum, then the solution $\phi(x)$ is said to be a ground--state solution of equation  (2.3). Here we define the functional $J_\omega(\phi)$
\begin{align*}
J_\omega(\phi)=\frac{1}{2} \int_{R^3}|\nabla \phi|^2 d x+\frac{1-\omega^2}{2} \int_{R^3}|\phi|^2 d x+\frac{2}{(p+1)^2} \int_{R^3}|\phi|^{p+1} d x-\frac{1}{p+1} \int_{R^3}|\phi|^{p+1} \operatorname{In} |\phi|^2 dx,
\end{align*}
and the functional
$K_\omega(\phi)$
\begin{align*}
K_\omega(\phi) &=\frac{1}{2} \int_{R^3}|\nabla \phi|^2 d x+\frac{3\left(1-\omega^2\right)}{2} \int_{R^3}|\phi|^2 d x+\frac{6}{(p+1)^2} \int_{R^3}|\phi|^{p+1} d x \\
&\quad-\frac{3}{p+1} \int_{R^3}|\phi|^{p+1} \operatorname{In} |\phi|^2 d x.
\end{align*}
In addition, we define the set
\begin{align*}
M_\omega=\left\{\phi \in H_r^1 \mid K_\omega(\phi)=0 ,\phi \neq 0\right\},
\end{align*}
then we consider the constrained variational problem
\begin{align*}
d(\omega)=\inf _{\phi \in M_\omega} J_\omega(\phi).
\end{align*}

First, we have the existence of the standing wave solution related to the ground state $\phi_0(x)$.
 \begin{theorem}\label{thm2.1}
Let $2<p<4$, $N=3$ and $\omega \in[0,1]$,

(i) variational problem
\begin{align}\label{10.1}
d(\omega)=\inf _{\phi \in M_\omega} J_\omega(\phi)
\end{align}
are equivalent to minimization problems
\begin{align}\label{10.2}
\widetilde{d(\omega)}=\inf_{\substack{\phi}} \left\{\frac{1}{3} \int_{R^3}|\nabla \phi|^2 d x, K_\omega(\phi) \leq 0, \phi \neq 0\right\},
\end{align}

(ii) there exists $\phi_0 \in M_\omega$ such that $d(\omega)=\underset{\phi \in M_\omega}{\inf} J_\omega(\phi)=J_\omega\left(\phi_0\right)$,\\

(iii) $\phi_0$ is a solution of $-\Delta \phi+\left(1-\omega^2\right) \phi=|\phi|^{p-1} \phi \ln |\phi|^2$.
\end{theorem}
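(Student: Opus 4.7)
For part (i), the crucial algebraic identity is $3J_\omega(\phi) - K_\omega(\phi) = \|\nabla\phi\|_{L^2}^2$, verified by direct subtraction of the two defining expressions. On $M_\omega$ this forces $J_\omega(\phi) = \tfrac{1}{3}\|\nabla\phi\|_{L^2}^2$, and since $M_\omega \subset \{\phi \neq 0 : K_\omega(\phi) \leq 0\}$, the direction $\widetilde{d(\omega)} \leq d(\omega)$ is immediate. For the reverse, I exploit the stretching transformation $\phi_\lambda(x) := \phi(x/\lambda)$, which in $R^3$ rescales to $K_\omega(\phi_\lambda) = \lambda A(\phi) + \lambda^3 B(\phi)$ where $A(\phi) = \tfrac{1}{2}\|\nabla\phi\|_{L^2}^2 > 0$ and $B(\phi)$ collects the remaining (scale-cubic) terms. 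When $K_\omega(\phi) \leq 0$ we have $B(\phi) \leq -A(\phi) < 0$, so $\lambda^\ast := \sqrt{-A/B} \in (0,1]$ realises $\phi_{\lambda^\ast} \in M_\omega$ with $\|\nabla\phi_{\lambda^\ast}\|^2 = \lambda^\ast \|\nabla\phi\|^2 \leq \|\nabla\phi\|^2$; therefore $d(\omega) \leq J_\omega(\phi_{\lambda^\ast}) = \tfrac{1}{3}\|\nabla\phi_{\lambda^\ast}\|^2 \leq \tfrac{1}{3}\|\nabla\phi\|^2$, whence $d(\omega) \leq \widetilde{d(\omega)}$.

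For part (ii), a minimizing sequence $\{\phi_k\}$ for $\widetilde{d(\omega)}$ must first be bounded in $H^1_r$ and bounded away from zero. The pivotal estimate splits $\int |\phi|^{p+1}\ln|\phi|^2$ across $\{|\phi|\leq 1\}$ (where the integrand is nonpositive) and $\{|\phi|>1\}$ (where $\ln|\phi|^2 \leq C_\epsilon|\phi|^\epsilon$), giving $\int|\phi|^{p+1}\ln|\phi|^2 \leq C_\epsilon\|\phi\|_{L^{p+1+\epsilon}}^{p+1+\epsilon}$. Substituting into $K_\omega(\phi) \leq 0$ and applying (2.2) yields simultaneously $\|\nabla\phi\|^2 \leq C\|\nabla\phi\|^{3(p-1)/2}\|\phi\|_2^{(5-p)/2}$ and $\|\phi\|_2^2 \leq C\|\nabla\phi\|^{3(p-1)/2}\|\phi\|_2^{(5-p)/2}$; the latter rearranges to $\|\phi\|_2 \leq C\|\nabla\phi\|^3$, and substitution into the former gives $\|\nabla\phi\|^4 \geq 1/C$. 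This establishes $\widetilde{d(\omega)} > 0$ and simultaneously bounds $\|\phi_k\|_{H^1_r}$. The compact Strauss embedding $H^1_r(R^3) \hookrightarrow L^q$ for $2<q<6$ extracts $\phi_k \rightharpoonup \phi_0$ weakly in $H^1_r$ and strongly in every such $L^q$; combining the sandwich bound $||\phi|^{p+1}\ln|\phi|^2| \leq C(|\phi|^{p+1-\epsilon} + |\phi|^{p+1+\epsilon})$ with Vitali's theorem passes the logarithmic term to the limit, and weak lower semicontinuity then gives $K_\omega(\phi_0) \leq \liminf K_\omega(\phi_k) \leq 0$ and $\tfrac{1}{3}\|\nabla\phi_0\|^2 \leq \widetilde{d(\omega)}$. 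If $\phi_0 = 0$ then $\|\phi_k\|_{L^{p+1+\epsilon}} \to 0$, and the constraint forces $\|\nabla\phi_k\|\to 0$, contradicting $\widetilde{d(\omega)} > 0$; hence $\phi_0 \neq 0$. A strict inequality $K_\omega(\phi_0) < 0$ would allow rescaling (with $\lambda^\ast < 1$ from part (i)) to produce a competitor in $M_\omega$ with strictly smaller gradient norm, contradicting minimality; therefore $K_\omega(\phi_0) = 0$, i.e.\ $\phi_0 \in M_\omega$, and $d(\omega) = J_\omega(\phi_0)$.

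For part (iii), the Lagrange multiplier theorem applied to the constrained minimization yields $\Lambda \in R$ with $J_\omega'(\phi_0) = \Lambda K_\omega'(\phi_0)$, and the task reduces to showing $\Lambda = 0$. Testing against the scaling tangent $v := \tfrac{d}{d\mu}\phi_0(\cdot/\mu)\big|_{\mu=1}$ and using
\begin{align*}
\tfrac{d}{d\mu}J_\omega(\phi_{0,\mu})\big|_{\mu=1} = K_\omega(\phi_0) = 0, \qquad \tfrac{d}{d\mu}K_\omega(\phi_{0,\mu})\big|_{\mu=1} = -\|\nabla\phi_0\|_{L^2}^2,
\end{align*}
(the second by differentiating $K_\omega(\phi_\mu) = \tfrac{\mu}{2}\|\nabla\phi_0\|^2 + 3\mu^3 H(\phi_0)$ and eliminating $H(\phi_0)$ via the constraint $K_\omega(\phi_0)=0$), I obtain $0 = -\Lambda\|\nabla\phi_0\|^2$, and since $\|\nabla\phi_0\|>0$ this forces $\Lambda = 0$. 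A direct Fréchet derivative computation $J_\omega'(\phi) = -\Delta\phi + (1-\omega^2)\phi - |\phi|^{p-1}\phi\ln|\phi|^2$ (the two $\tfrac{2}{p+1}|\phi|^{p-1}\phi$ contributions from differentiating the $L^{p+1}$ term and the log term cancel exactly) then reads off the claimed Euler--Lagrange equation. The dominant technical difficulty throughout is the logarithmic nonlinearity: it is the $L^{p+1\pm\epsilon}$ sandwiching estimate that reduces every non-homogeneous log computation --- both the \emph{a priori} estimates proving $\widetilde{d(\omega)} > 0$ and the limit passages through compactness --- to standard Sobolev and Gagliardo--Nirenberg machinery.
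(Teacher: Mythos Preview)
Your proof is correct and follows essentially the same strategy as the paper: the scaling $\phi_\lambda(x)=\phi(x/\lambda)$ to pass between $K_\omega\le 0$ and $K_\omega=0$ in part (i), Gagliardo--Nirenberg bounds plus the Strauss compact embedding for part (ii), and elimination of the Lagrange multiplier via a Pohozaev-type scaling identity in part (iii). The differences are purely presentational: you bound the logarithm by the single sandwich $|\phi|^{p+1\pm\epsilon}$ and invoke Vitali for the limit passage, whereas the paper uses $u\ln u^2\le C(1+u^2)$ to land on three Lebesgue exponents $p,\,p+1,\,p+2$; and you phrase the multiplier argument as testing against the scaling tangent $\tfrac{d}{d\mu}\phi_0(\cdot/\mu)\big|_{\mu=1}$, while the paper writes out the same computation by forming an auxiliary functional $J_{1\omega}$ and repeating the Pohozaev calculation of Lemma~3.2.
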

Second, let
\begin{align*}
X=\left\{u \in H_r^1 \mid \int_{R^3} G(|u|) d x<\infty\right\},
\end{align*}
where
are equivalent to minimization problems
\begin{align}
G(|u|)=\frac{2}{(p+1)^2}|u|^{p+1}-\frac{1}{p+1}|u|^{p+1} \operatorname{In}|u|^2.
\end{align}
We define an invariant set
\begin{align*}
R_1=\left\{u \in X, u_t \in L_r^p \mid E\left(u, u_t\right)<d(0), K_0(u)<0, u \neq 0\right\},
\end{align*}
and obtain  the instability of solutions around $\phi_0$.
\begin{theorem}\label{thm2.2}
Let $\phi(x)$ be a ground state solution of equation  (2.3). For any $\varepsilon>0$, there exists $\left(u_0, u_1\right) \in R_1$ that satisfies
\begin{align}
\left\|\left(u_0, u_1\right)-\left(\phi, 0\right)\right\|_{H_r^1 \times L_r^2}<\varepsilon.
\end{align}

And if the solutions $u(t, x)$ of equation  (1.1)  satisfy $\left(u(0, x), u_t(0, x)\right)=\left(u_0, u_1\right)\in R_1$ and  radially symmetric. Then either

(i) the solutions exist only locally in time $[0,T_0),T_0<\infty$, with $\left\{t_n\right\}$ such that $\left\|u\left(t_n\right)\right\|_{H_r^1} \rightarrow \infty$ as $t_n \rightarrow T_0$  for some sequence $\left\{t_n\right\}$; or

(ii) the solutions exist globally,  then there exists a sequence $\left\{t_n\right\}$ such that $\left\|u\left(t_n\right)\right\|_{H_r^1} \rightarrow \infty$ as $t_n \rightarrow \infty$.
\end{theorem}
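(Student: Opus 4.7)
The plan is to follow the classical Payne-Sattinger invariant-set scheme, adapted to the logarithmic nonlinearity. First, to produce $(u_0, u_1) \in R_1$ arbitrarily close to $(\phi, 0)$, I will perturb $\phi$ via the dilation family $u_0^\mu(x) = \phi(x/\mu)$ with $\mu > 1$ near $1$ and $u_1 \equiv 0$. Using the scaling of $J_0$ and $K_0$ along this path, write $J_0(\phi_\mu) = \mu A + \tfrac{\mu^3}{3} B$ and $K_0(\phi_\mu) = \mu A + \mu^3 B$ with $A = \tfrac{1}{2}\|\nabla \phi\|_{L^2}^2 > 0$; the ground-state identity $K_0(\phi) = 0$ forces $B = -A$. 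Then $\tfrac{d}{d\mu}K_0(\phi_\mu)\big|_{\mu=1} = -2A < 0$, yielding $K_0(u_0^\mu) < 0$ for $\mu > 1$ near $1$, while the vanishing first derivative of $J_0(\phi_\mu)$ combined with $\tfrac{d^2}{d\mu^2} J_0(\phi_\mu)\big|_{\mu=1} = -2A < 0$ shows $\mu = 1$ is a strict local maximum of $J_0(\phi_\mu)$, giving $J_0(u_0^\mu) < d(0)$. Since $\|u_0^\mu - \phi\|_{H_r^1} \to 0$ as $\mu \to 1^+$ by change of variables, and $E(u_0^\mu, 0) = J_0(u_0^\mu) < d(0)$, the pair $(u_0^\mu, 0)$ lies in $R_1$ arbitrarily close to $(\phi, 0)$.

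Next I will prove that $R_1$ is invariant under the Cauchy flow. Conservation of $E$ immediately preserves $E(u(t), u_t(t)) < d(0)$, and the sign condition $K_0(u(t)) < 0$ is preserved by a continuity-contradiction argument: if $K_0(u(t_0)) = 0$ for some first time $t_0$ with $u(t_0) \neq 0$, then Theorem~\ref{thm2.1} forces $J_0(u(t_0)) \geq d(0)$, contradicting $J_0(u(t_0)) = E - \tfrac{1}{2}\|u_t(t_0)\|_{L^2}^2 \leq E < d(0)$. Setting $M(t) = \|u(t)\|_{L^2}^2$, differentiating twice using (1.1) and then eliminating $\int |u|^{p+1}\ln|u|^2\,dx$ via energy conservation yields
\begin{align*}
M''(t) = (p+3)\|u_t\|_{L^2}^2 + (p-1)\|\nabla u\|_{L^2}^2 + (p-1)\|u\|_{L^2}^2 + \tfrac{4}{p+1}\|u\|_{L^{p+1}}^{p+1} - 2(p+1) E.
\end{align*}
The identity $\tfrac{1}{3}\|\nabla u\|_{L^2}^2 = J_0(u) - \tfrac{1}{3} K_0(u)$ combined with Theorem~\ref{thm2.1}(i) furnishes the uniform bound $-K_0(u(t)) \geq 3(d(0) - E) + \tfrac{3}{2}\|u_t\|_{L^2}^2 > 0$, and in particular $\|\nabla u(t)\|_{L^2}^2 \geq 3 d(0)$ throughout the lifespan. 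Inserting these into $M''(t)$ and running a Payne-Sattinger-type concavity argument, i.e.\ verifying that $(M(t) + \beta(t + c)^2)^{-\sigma}$ is concave for suitable $\beta, c, \sigma > 0$, produces either finite-time blow-up of $M(t)$ or unboundedness of $M(t_n)$ along a sequence $t_n \to \infty$. In case (i), the $H_r^1$ blow-up alternative supplies $t_n \to T_0$ with $\|u(t_n)\|_{H_r^1} \to \infty$; in case (ii), unboundedness of $M(t)$ gives $\|u(t_n)\|_{L^2} \to \infty$, hence $\|u(t_n)\|_{H_r^1} \to \infty$.

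The main obstacle is the virial estimate itself. Unlike the pure-power case $|u|^{p-1} u$, the logarithmic nonlinearity $|u|^{p-1}u \ln|u|^2$ is non-homogeneous and sign-indefinite, so the clean scaling identities underpinning the classical Payne-Sattinger argument do not directly apply. The crucial input is the sharp lower bound $-K_0(u(t)) \geq 3(d(0) - E) + \tfrac{3}{2}\|u_t\|_{L^2}^2$ extracted from the variational characterization in Theorem~\ref{thm2.1}(i), and the central technical step is to combine it with energy conservation so as to absorb the negative contribution $-2(p+1) E$ appearing in $M''(t)$ uniformly for all energy levels $E < d(0)$, while respecting the constraint $2 < p < 4$.
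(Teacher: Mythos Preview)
Your construction of nearby data via the dilation $u_0 = \phi(\cdot/\mu)$ and your invariance argument for $R_1$ match the paper's Section~4 exactly. The divergence is in the virial step.

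The paper does \emph{not} use the Payne--Sattinger $L^2$--concavity you propose. Instead it follows Shatah's Morawetz--multiplier scheme: one multiplies the equation by $\phi_m(r,t)\bar u_r$ with a carefully truncated radial weight $\phi_m$ (Lemma~4.3), assumes by contradiction that $\|u(t)\|_{H^1_r}$ stays bounded, extracts a uniform gap $K_0(u(t))\le -\varepsilon$ via weak limits, and then shows that the resulting virial identity forces $\int \nabla u\cdot u_t\,dx \ge C(\varepsilon)\ln t$, contradicting boundedness.

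Your route has a genuine gap. The $L^2$--concavity identity
\[
M''(t)=2\|u_t\|^2 - 2N(u),\qquad N(u)=\|\nabla u\|^2+\|u\|^2-\int|u|^{p+1}\ln|u|^2\,dx,
\]
is governed by the \emph{Nehari} functional $N$, whereas $R_1$ is defined through the \emph{Pohozaev} functional $K_0$; these are not comparable in general, and $K_0(u)<0$ does not imply $N(u)<0$. When you rewrite $M''$ via energy and insert only $\|\nabla u\|^2\ge 3d(0)$, the resulting constant is $3(p-1)d(0)-2(p+1)E$, which is negative precisely on the energy interval $\tfrac{3(p-1)}{2(p+1)}d(0)\le E<d(0)$ --- nonempty for every $p\in(2,4)$ and in fact containing the data you construct (for which $E\approx d(0)$). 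The extra positive terms $(p-1)M$ and $\tfrac{4}{p+1}\|u\|_{p+1}^{p+1}$ cannot be bounded below a priori on $R_1$, so the Levine modification $(M+\beta(t+c)^2)^{-\sigma}$ does not close. Moreover, Levine concavity, when it does close, forces \emph{finite}-time blow-up; it never yields the alternative ``global but $\|u(t_n)\|_{H^1_r}\to\infty$'' that the theorem must also cover. Shatah's multiplier is designed exactly for this Pohozaev--constraint region and delivers the logarithmic growth that rules out global boundedness without claiming finite-time blow-up.
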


\section{Existence of Standing Wave Solutions}
\setcounter{section}{3}\setcounter{equation}{0}
In this section, we prove Theorem 2.1, first, we give some lemmas.
\begin{lemma}$[34]$\label{lem2.1}
	Let $N>2$, under radially symmetric condition, $H^{1}\hookrightarrow L^{p}$      is compact for $2<p<2+4/(N-2)$.
	
\end{lemma}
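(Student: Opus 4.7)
The plan is to prove this as Strauss's classical radial compactness theorem. The two main ingredients will be (a) the Strauss radial decay estimate, which forces radial $H^1$ functions to decay at infinity at a uniform rate, and (b) the Rellich--Kondrachov theorem on bounded balls. The upper bound $p<2+4/(N-2)=2N/(N-2)$ will enter through the local step, while the strict inequality $p>2$ will be what makes the tail estimate useful.

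First I would establish the Strauss radial decay lemma: for any radial $u\in C^\infty_c(\mathbb{R}^N)$ and $|x|=r>0$, one writes $r^{N-1}|u(r)|^2 = -\int_r^\infty \frac{d}{d\rho}\bigl(\rho^{N-1}|u(\rho)|^2\bigr)\,d\rho$ and bounds the right--hand side by $2\int_r^\infty \rho^{N-1}|u(\rho)||u'(\rho)|\,d\rho$, followed by Cauchy--Schwarz in polar coordinates. This yields
\begin{equation*}
|u(x)|\le C_N\,|x|^{-(N-1)/2}\,\|u\|_{H^1(\mathbb{R}^N)},\qquad |x|\ge 1,
\end{equation*}
and the bound extends to all of $H^1_r$ by density.

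Next, take a sequence $\{u_n\}$ bounded in $H^1_r$. By reflexivity, pass to a subsequence with $u_n\rightharpoonup u$ weakly in $H^1$; the weak limit $u$ is automatically radial. For any fixed $R>0$, Rellich--Kondrachov applied to $B_R$ gives strong convergence $u_n\to u$ in $L^p(B_R)$, and this is precisely the step where the hypothesis $p<2N/(N-2)$ is used.

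For the tail, I would use the radial decay estimate together with the uniform $H^1$ bound to write, for $|x|\ge R\ge 1$,
\begin{equation*}
\int_{|x|\ge R}|u_n|^p\,dx \le \Bigl(\sup_{|x|\ge R}|u_n(x)|\Bigr)^{p-2}\int_{|x|\ge R}|u_n|^2\,dx \le C\,R^{-(N-1)(p-2)/2},
\end{equation*}
with $C$ independent of $n$; the same bound holds for $u$. Since $p>2$ and $N>2$, the exponent $(N-1)(p-2)/2$ is strictly positive, so the tail tends to $0$ as $R\to\infty$ uniformly in $n$. Combining the local strong convergence on $B_R$ with the uniform smallness of the tails produces strong convergence $u_n\to u$ in $L^p(\mathbb{R}^N)$, proving compactness of the embedding $H^1_r\hookrightarrow L^p$.

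The main obstacle is really just the Strauss decay estimate; once one has the pointwise bound $|u(x)|\lesssim |x|^{-(N-1)/2}$, the rest is the standard ``interior convergence plus uniform tail control'' routine. The delicate point is to be careful that the tail estimate uses only $p>2$ (it degenerates at $p=2$, which is exactly why $L^2$ compactness fails) while the interior estimate uses only $p<2^*$; the compatibility of these two restrictions is what pins down the admissible range of $p$ in the statement.
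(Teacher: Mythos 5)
The paper does not prove this lemma at all: it is quoted verbatim from reference [34] (Strauss, \emph{Existence of solitary waves in higher dimensions}), and your argument --- the radial decay estimate $|u(x)|\le C_N|x|^{-(N-1)/2}\|u\|_{H^1}$ obtained by integrating $\frac{d}{d\rho}\bigl(\rho^{N-1}|u|^2\bigr)$ and Cauchy--Schwarz, combined with Rellich--Kondrachov on $B_R$ (using $p<2N/(N-2)=2+4/(N-2)$) and the uniform tail bound $CR^{-(N-1)(p-2)/2}$ (using $p>2$) --- is precisely the classical proof from that source, and it is correct. The only cosmetic points are that the decay estimate in fact holds for all $r>0$, not just $|x|\ge 1$, and that in the final step the tail bound should be applied to $u_n-u$ via the triangle inequality, both of which are immediate.
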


\begin{lemma}\label{lem2.2}
    If  $\phi_\omega \in H_r^1$ is a solution of $-\Delta \phi+\left(1-\omega^2\right) \phi=|\phi|^{p-1} \phi \operatorname{In} |\phi|^2$,
	then
\begin{align*}
	K_\omega\left(\phi_\omega\right)=0.
\end{align*}
\end{lemma}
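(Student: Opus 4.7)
The statement of Lemma 2.2 is a Pohozaev-type identity, and my strategy is to derive it by the classical multiplier method. A direct inspection reveals that $K_\omega(\phi)$ coincides with $\frac{d}{d\lambda}J_\omega(\phi(\cdot/\lambda))\big|_{\lambda=1}$: under the substitution $\phi\mapsto\phi(\cdot/\lambda)$ in dimension $N=3$, the gradient term scales like $\lambda$ while the mass, power and logarithmic terms scale like $\lambda^{3}$; differentiating at $\lambda=1$ then reproduces exactly the coefficients $\tfrac{1}{2}$, $\tfrac{3(1-\omega^{2})}{2}$, $\tfrac{6}{(p+1)^{2}}$ and $-\tfrac{3}{p+1}$ that define $K_\omega$. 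This observation already signals that any solution of the Euler--Lagrange equation must satisfy $K_\omega(\phi_\omega)=0$, and it also dictates the correct multiplier: $x\cdot\nabla\phi_\omega$.

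The plan is therefore to test the equation $-\Delta\phi_\omega+(1-\omega^{2})\phi_\omega=|\phi_\omega|^{p-1}\phi_\omega\ln|\phi_\omega|^{2}$ against $x\cdot\nabla\phi_\omega$ and integrate over $R^{3}$. Three standard computations will be needed. First, integration by parts gives $\int(-\Delta\phi_\omega)(x\cdot\nabla\phi_\omega)\,dx=-\tfrac{N-2}{2}\int|\nabla\phi_\omega|^{2}\,dx=-\tfrac{1}{2}\int|\nabla\phi_\omega|^{2}\,dx$. Second, the divergence trick $\phi_\omega(x\cdot\nabla\phi_\omega)=\tfrac{1}{2}x\cdot\nabla(\phi_\omega^{2})$ yields $\int\phi_\omega(x\cdot\nabla\phi_\omega)\,dx=-\tfrac{3}{2}\int\phi_\omega^{2}\,dx$. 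Third, the logarithmic nonlinearity is a total derivative once we introduce
\begin{equation*}
G(s)=\frac{|s|^{p+1}}{p+1}\ln|s|^{2}-\frac{2|s|^{p+1}}{(p+1)^{2}},
\end{equation*}
for which a direct differentiation shows $G'(s)=|s|^{p-1}s\ln|s|^{2}$ and $G(0)=0$ (since $s^{p+1}\ln s^{2}\to 0$ as $s\to 0$ for $p>-1$). Hence $\int|\phi_\omega|^{p-1}\phi_\omega\ln|\phi_\omega|^{2}(x\cdot\nabla\phi_\omega)\,dx=\int x\cdot\nabla G(\phi_\omega)\,dx=-3\int G(\phi_\omega)\,dx$. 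Collecting the three contributions and rearranging the resulting identity reproduces exactly $K_\omega(\phi_\omega)=0$.

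The main obstacle is justifying the integration by parts, i.e.\ ensuring absolute convergence of each integral and vanishing of the boundary terms at infinity. My approach is first to bootstrap from $\phi_\omega\in H_r^{1}$ to $\phi_\omega\in H^{2}$ via elliptic regularity: the radial Strauss decay $|\phi_\omega(x)|\lesssim|x|^{-1}$ controls the logarithmic factor at infinity, and together with $\phi_\omega\in L^{2}\cap L^{6}$ this places the right-hand side of the equation in $L^{2}$, so that $\Delta\phi_\omega\in L^{2}$. Next, a standard comparison argument exploiting that the nonlinearity is $o(\phi_\omega)$ as $|\phi_\omega|\to 0$ (since for $p>1$ the factor $|\phi|^{p-1}$ beats the logarithm) yields exponential decay of $\phi_\omega$ and $\nabla\phi_\omega$ when $\omega^{2}<1$; for the endpoint $\omega=1$ one would instead insert a cutoff $\chi(x/R)$, multiply by $\chi(x/R)\,x\cdot\nabla\phi_\omega$, and verify that the commutator errors produced by $\nabla\chi(x/R)$ vanish as $R\to\infty$ using $\phi_\omega\in H^{1}$ together with the finiteness of $J_\omega(\phi_\omega)$. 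Once the decay is secured, $-x\cdot\nabla\phi_\omega$ is an admissible variation in $H_r^{1}$, and the identity follows either directly from the computation above or, more compactly, by noting that $\phi_\omega$ is a critical point of $J_\omega$ and testing $J_\omega'(\phi_\omega)=0$ against $-x\cdot\nabla\phi_\omega$.
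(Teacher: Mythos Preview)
Your proposal is correct and rests on the same Pohozaev/scaling idea as the paper, though you present it through a different door. The paper's proof is exactly the short variational version you identify at the outset and again in your final sentence: it sets $\psi_\beta(x)=\phi_\omega(x/\beta)$, computes $J_\omega(\psi_\beta)$ (the gradient term scales like $\beta$, the remaining terms like $\beta^{3}$), and then uses that $\phi_\omega$ is a critical point of $J_\omega$ to conclude $\frac{d}{d\beta}J_\omega(\psi_\beta)\big|_{\beta=1}=K_\omega(\phi_\omega)=0$, with no discussion of regularity or decay. Your route instead carries out the equivalent multiplier computation, testing the equation against $x\cdot\nabla\phi_\omega$ term by term, and devotes most of its effort to justifying the integration by parts via elliptic bootstrap, the Strauss radial decay bound, and exponential decay for $\omega^{2}<1$ (or a cutoff argument at $\omega=1$). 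The two computations are interchangeable because $\frac{\partial}{\partial\beta}\psi_\beta\big|_{\beta=1}=-x\cdot\nabla\phi_\omega$; what you gain is a rigorous treatment of the boundary terms that the paper's three-line argument leaves implicit, at the cost of considerably more length.
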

\begin{proof} Let $\psi_\beta(x)=\phi_\omega\big(\frac{x}{\beta}\big)$,	we have
\begin{align*}
J_\omega\left(\psi_\beta\right) &=\frac{1}{2} \int_{R^3}\left|\nabla \psi_\beta\right|^2 d x+\frac{1-\omega^2}{2} \int_{R^3}\left|\psi_\beta\right|^2 d x\\
&\quad+\frac{2}{(p+1)^2} \int_{R^3}\left|\psi_\beta\right|^{p+1} d x-\frac{1}{p+1} \int_{R^3}\left|\psi_\beta\right|^{p+1} \operatorname{In} \left|\psi_\beta\right|^2 d x \\
&=\frac{\beta}{2} \int_{R^3}\left|\nabla \phi_\omega\right|^2 d x+\frac{\beta^3\left(1-\omega^2\right)}{2} \int_{R^3}\left|\phi_\omega\right|^2 d x \\
&\quad+\frac{2 \beta^3}{(p+1)^2} \int_{R^3}\left|\phi_\omega\right|^{p+1} d x-\frac{\beta^3}{p+1} \int_{R^3}\left|\phi_\omega\right|^{p+1} \operatorname{In} \left|\phi_\omega\right|^2 d x.
\end{align*}
 Since  $\phi_\omega$ is a solution of equation (2.3), then
\begin{align*}
\delta J_\omega\left(\phi_\omega\right)=\left.0 \Rightarrow \frac{d\left(J_\omega\left(\psi_\beta\right)\right)}{d \beta}\right|_{\beta=1}=0,
\end{align*}
so
\begin{align*}
\left.\frac{d\left(J_\omega\left(\phi_\beta\right)\right)}{d \beta}\right|_{\beta=1} & =\frac{1}{2} \int_{R^3}|\nabla \phi|^2 d x+\frac{3\left(1-\omega^2\right)}{2} \int_{R^3}|\phi|^2 d x \\
& \quad+\frac{6}{(p+1)^2} \int_{R^3}|\phi|^{p+1} d x-\frac{3}{p+1} \int_{R^3}|\phi|^{p+1} \operatorname{In} |\phi|^2 d x \\
& =K_\omega\left(\phi_\omega\right)=0.
\end{align*}
This completes the proof of the Lemma 3.2.
\end{proof}
 \noindent {\bf Proof of Theorem 2.1.} We divide the proof of  Theorem 2.1 into three steps.\\		 		
Step1: Let $\widetilde{d(\omega)}=\inf \left\{\frac{1}{3} \int_{R^3}|\nabla \phi|^2 d x, K_\omega(\phi) \leq 0, \phi \neq 0\right\}$, we prove the equivalence of $d(\omega)$ and  $\widetilde{d(\omega)}$.
By the definition of $d(\omega)$ and  $\widetilde{d(\omega)}$, we have  $\widetilde{d(\omega)} \leq d(\omega)$. For any $\phi \in H_r^1$ satisfying  $ K_\omega(\phi)<0$,
let $\psi_\beta(x)=\phi\big(\frac{x}{\beta}\big)$,
\begin{align*}
K_\omega\left(\psi_\beta\right) &=\frac{1}{2} \int_{R^3}\left|\nabla \psi_\beta\right|^2 d x+\frac{3\left(1-\omega^2\right)}{2} \int_{R^3}\left|\psi_\beta\right|^2 d x \\
&\quad+\frac{6}{(p+1)^2} \int_{R^3}\left|\psi_\beta\right|^{p+1} d x-\frac{3}{p+1} \int_{R^3}\left|\psi_\beta\right|^{p+1} \operatorname{In} \left|\psi_\beta\right|^2 d x \\
&=\frac{\beta}{2} \int_{R^3}|\nabla \phi|^2 d x+\frac{3\beta^3 \left(1-\omega^2\right)}{2} \int_{R^3}|\phi|^2 d x \\
&\quad+\frac{6 \beta^3}{(p+1)^2} \int_{R^3}|\phi|^{p+1} d x-\frac{3 \beta^3}{p+1} \int_{R^3}|\phi|^{p+1} \operatorname{In} |\phi|^2 d x.
\end{align*}
When $\beta=1$, $K_\omega\left(\psi_1\right)=K_\omega(\phi)<0$.\\
When $\beta \rightarrow 0$,
\begin{align*}
K_\omega\left(\psi_\beta\right) & =\beta\left[\frac{1}{2} \int_{R^3}|\nabla \phi|^2 d x+\frac{3\beta^2\left(1-\omega^2\right)}{2} \int_{R^3}|\phi|^2 d x\right. \\
& \quad\left.+\frac{6 \beta^2}{(p+1)^2} \int_{R^3}|\phi|^{p+1} d x-\frac{3 \beta^2}{p+1} \int_{R^3}|\phi|^{p+1} \operatorname{In} |\phi|^2 d x\right]>0.
\end{align*}
So there exists  $\beta^* \in(0,1)$ such that $K_\omega\left(\psi_{\beta^*}\right)=0$ such as
\begin{align*}
\frac{1}{3} \int_{R^3}\left|\nabla \psi_{\beta^*}\right|^2 d x=\frac{\beta^*}{3} \int_{R^3}|\nabla \phi|^2 d x<\frac{1}{3} \int_{R^3}|\nabla \phi|^2 d x,
\end{align*}
which implies
$d(\omega) \leq \widetilde{d(\omega)}$. Thus, $d(\omega) = \widetilde{d(\omega)}$ holds.\\
Step2: We prove there exists $\phi_0 \in M_\omega$  such that
\begin{align*}
d(\omega)=\inf _{\phi \in M_\omega} J_\omega(\phi)=J_\omega\left(\phi_0\right).
\end{align*}
Let $\phi_k$ be a minimum sequence for the minimization problem (2.5),
then $\int_{R^3}\left|\nabla \phi_k\right|^2 d x$ is bounded. Since $K_\omega\left(\phi_k\right) \leq 0$, then
\begin{align}\label{20.16}
K_\omega\left(\phi_k\right) &=\frac{1}{2} \int_{R^3}\left|\nabla \phi_k\right|^2 d x+\frac{3\left(1-\omega^2\right)}{2} \int_{R^3}\left|\phi_k\right|^2 d x \\
&\quad+\frac{6}{(p+1)^2} \int_{R^3}\left|\phi_k\right|^{p+1} d x-\frac{3}{p+1} \int_{R^3}\left|\phi_k\right|^{p+1} \operatorname{In} \left|\phi_k\right|^2 d x \leq 0,
\end{align}
this indicates
\begin{align}\label{2.4}
&\frac{1}{2} \int_{R^3}\left|\nabla \phi_k\right|^2 d x+\frac{3\left(1-\omega^2\right)}{2} \int_{R^3}\left|\phi_k\right|^2 d x \\
&\leq \frac{3}{p+1} \int_{R^3}\left|\phi_k\right|^{p+1} \operatorname{In} \left|\phi_k\right|^2 d x+\frac{6}{(p+1)^2} \int_{R^3}\left|\phi_k\right|^{p+1} d x.
\end{align}
Notice that $u \operatorname{In} u^2 \leq C_2\left(1+u^2\right)$, $u>0$, then
\begin{align}\label{8.1}
&\frac{1}{2} \int_{R^3}\left|\nabla \phi_k\right|^2 d x+\frac{3\left(1-\omega^2\right)}{2} \int_{R^3}\left|\phi_k\right|^2 d x \\
&\leq \frac{3C_2}{p+1} \int_{R^3}\left|\phi_k\right|^p d x+\frac{3C_2}{p+1} \int_{R^3}\left|\phi_k\right|^{p+2} d x+\frac{6}{(p+1)^2} \int_{R^3}\left|\phi_k\right|^{p+1} d x.
\end{align}
By using the Gagliardo--Nirenberg inequality, we obtain
\begin{align}\label{30.1}
\left\|\phi_k\right\|_{L_r^p} \leq C_3\left\|\phi_k\right\|_{L_r^2}^{1-\frac{3(p-2)}{2 p}}\left\|\nabla \phi_k\right\|_{L_r^2}^{\frac{3(p-2)}{2 p}},
\end{align}
\begin{align}\label{30.2}
\left\|\phi_k\right\|_{L_r^{p+1}} \leq C_4\left\|\phi_k\right\|_{L_r^2}^{1-\frac{3(p-1)}{2(p+1)}}\left\|\nabla \phi_k\right\|_{L_r^2}^{\frac{3(p-1)}{2(p+1)}},
\end{align}
\begin{align}\label{30.3}
\left\|\phi_k\right\|_{L_r^{p+2}} \leq C_5\left\|\phi_k\right\|_{L_r^2}^{1-\frac{3p}{2(p+2)}}\left\|\nabla \phi_k\right\|_{L_r^2}^{\frac{3p}{2(p+2)}},
\end{align}
where $2<p<4$. Then from Young's inequality in (3.7)--(3.9), we have
\begin{align}
\left\|\phi_k\right\|_{L_r^p}^p \leq C_6\left\|\phi_k\right\|_{L_r^2}^{p-\frac{3(p-2)}{2}}\left\|\nabla \phi_k\right\|_{L_r^2}^{\frac{3(p-2)}{2}} \leq C_6\left(\varepsilon\left\|\phi_k\right\|_{L_r^2}^{\frac{m(6-p)}{2}}+\varepsilon^{-\frac{n}{m}}\left\|\nabla \phi_k\right\|_{L_r^2}^{\frac{3 n(p-2)}{2}}\right),
\end{align}
\begin{align}
\left\|\phi_k\right\|_{L_r^{p+1}}^{p+1} \leq C_7\left\|\phi_k\right\|_{L_r^2}^{p+1-\frac{3(p-1)}{2}}\left\|\nabla \phi_k\right\|_{L_r^2}^{\frac{3(p-1)}{2}} \leq C_7\left(\varepsilon\left\|\phi_k\right\|_{L_r^2}^{\frac{m(5-p)}{2}}+\varepsilon^{-\frac{n}{m}}\left\|\nabla \phi_k\right\|_{L_r^2}^{\frac{3 n(p-1)}{2}}\right),
\end{align}
\begin{align}\label{10.3}
\left\|\phi_k\right\|_{L_r^{+2}}^{p+2} \leq C_8\left\|\phi_k\right\|_{L_r^2}^{p+2-\frac{3 p}{2}}\left\|\nabla \phi_k\right\|_{L_r^2}^{\frac{3 p}{2}} \leq C_8\left(\varepsilon\left\|\phi_k\right\|_{L_r^2}^{\frac{m(4-p)}{2}}+\varepsilon^{-\frac{n}{m}}\left\|\nabla \phi_k\right\|_{L_r^2}^{\frac{3 n p}{2}}\right),
\end{align}
where $\varepsilon$ is sufficiently small, $m$, $n$ satisfy $m>1$, $n>1$, $\frac{1}{m}+\frac{1}{n}=1$. If $\left\|\phi_k\right\|_{L_r^2}<1$, $\left\|\phi_k\right\|_{H_r^1}$ is obviously bounded. If $\left\|\phi_k\right\|_{L_r^2}>1$, from (3.7)--(3.12), we have
\begin{align}\label{10.9}
& \frac{1}{2}\left\|\nabla \phi_k\right\|_{L_r^2}^2+\frac{3\left(1-\omega^2\right)}{2}\left\|\phi_k\right\|_{L_r^2}^2-\frac{C_9 \varepsilon\left\|\phi_k\right\|_{L_r^2}^{\frac{m(6-p)}{2}}}{p+1}-\frac{C_9 \varepsilon\left\|\phi_k\right\|_{L_r^2}^{\frac{m(4-p)}{2}}}{p+1}-\frac{C_9 \varepsilon\left\|\phi_k\right\|_{L_r^2}^{\frac{m(5-p)}{2}}}{(p+1)^2} \\
& \leq \frac{C_9 \varepsilon^{-\frac{n}{m}}\left\|\nabla \phi_k\right\|_{L_r^2}^{\frac{3 n(p-2)}{2}}}{p+1}+\frac{C_9 \varepsilon^{-\frac{n}{m}}\left\|\nabla \phi_k\right\|_{L_r^2}^{\frac{3 n p}{2}}}{p+1}+\frac{C_9 \varepsilon^{-\frac{n}{m}}\left\|\nabla \phi_k\right\|_{L_r^2}^{\frac{3 n(p-1)}{2}}}{(p+1)^2} .
\end{align}
Choose $1<m<\frac{4}{6-p}$, we obtain
\begin{align*}
\max \left\{2, \frac{m(6-p)}{2}, \frac{m(5-p)}{2}, \frac{m(4-p)}{2}\right\}=2,
\end{align*}
so (3.13) can be written as
\begin{align}\label{8.3}
\frac{3\left(1-\omega^2\right)}{2}\left\|\phi_k\right\|_{L_r^2}^2 \leq C_{9}\left\|\nabla \phi_k\right\|_{L_r^2}^{\frac{3 n(p-2)}{2}}+C_{9}\left\|\nabla \phi_k\right\|_{L_r^2}^{\frac{3 n p}{2}}+C_{9}\left\|\nabla \phi_k\right\|_{L_r^2}^{\frac{3 n(p-1)}{2}}.
\end{align}
Since $\int_{R^3}\left|\nabla \phi_k\right|^2 d x$ is bounded, then $\left\|\phi_k\right\|_{H_r^1}$ is bounded by (3.15), so there must be a convergent subsequence, also denoted as $\phi_k$, such that
\begin{align}\label{2.8}
\phi_k \rightarrow \phi_0 \text { weakly in } H_r^1,
\end{align}
from Lemma 3.1,
\begin{align}\label{2.8.1}
\phi_k \rightarrow \phi_0 \text { strongly in } L_r^p,  \quad 2<p<6.
\end{align}
By the lower semicontinuity of the limit, we have
\begin{align}
K_\omega\left(\phi_0\right)= & \frac{1}{2} \int_{R^3}\left|\nabla \phi_0\right|^2 d x+\frac{3\left(1-\omega^2\right)}{2} \int_{R^3}\left|\phi_0\right|^2 d x \\
& +\frac{6}{(p+1)^2} \int_{R^3}\left|\phi_0\right|^{p+1} d x-\frac{3}{p+1} \int_{R^3}\left|\phi_0\right|^{p+1} \operatorname{In}\left|\phi_0\right|^2 d x \\
\leq & \varliminf _{k \rightarrow \infty}\left(\frac{1}{2} \int_{R^3}\left|\nabla \phi_k\right|^2 d x+\frac{3\left(1-\omega^2\right)}{2} \int_{R^3}\left|\phi_k\right|^2 d x\right. \\
& \left.+\frac{6}{(p+1)^2} \int_{R^3}\left|\phi_k\right|^{p+1} d x-\frac{3}{p+1} \int_{R^3}\left|\phi_k\right|^{p+1} \operatorname{In}\left|\phi_k\right|^2 d x\right) \\
= & \varliminf _{k \rightarrow \infty} K_\omega\left(\phi_k\right) \leq 0,
\end{align}
according to Step1, we can obtain $K_\omega\left(\phi_0\right)=0$. Next we prove $\phi_0 \neq 0$. By contradiction, if $\phi_0=0$, we get
\begin{align}\label{10.4}
\phi_k \rightarrow 0 \in H_r^1, \quad \phi_k \rightarrow 0 \in L_r^p, \quad  2<p<6.
\end{align}
However, by Sobolev embedding theorem, when $2 \leq p \leq 4$, we get
\begin{align}
\left\|\phi_k\right\|_{L_r^p}^p \leq C_{10}\left\|\phi_k\right\|_{H_r^1}^p,\quad \left\|\phi_k\right\|_{L_r^p}^{p+1} \leq C_{11}\left\|\phi_k\right\|_{H_r^1}^{p+1},\quad \left\|\phi_k\right\|_{L_r^p}^{p+2} \leq C_{12}\left\|\phi_k\right\|_{H_r^1}^{p+2}.
\end{align}
So, from (3.5) we obtain
\begin{align}
\alpha\left\|\phi_k\right\|_{H_r^1}^2 \leq \frac{C_{13} }{p+1}\left\|\phi_k\right\|_{H_r^1}^p+\frac{C_{13} }{p+1}\left\|\phi_k\right\|_{H_r^1}^{p+2}+\frac{C_{13} }{(p+1)^2}\left\|\phi_k\right\|_{H_r^1}^{p+1},
\end{align}
where $\alpha=\min \left\{\frac{1}{2}, \frac{3\left(1-\omega^2\right)}{2}\right\}$.
Since $\phi_k \neq 0$ and $\left\|\phi_k\right\|_{H_r^1}$ is bounded, then
 \begin{align}\label{20.15}
\left\|\phi_k\right\|_{H_r^1}^{p-2}+\left\|\phi_k\right\|_{H_r^1}^p+\left\|\phi_k\right\|_{H_r^1}^{p-1} \geq \frac{\alpha(p+1)}{C_{13}},
\end{align}
which contradicts to (3.23). Therefore $\phi_0 \neq 0$ and combine with $K_\omega\left(\phi_0\right)=0$, we deduce that $\phi_0 \in M_\omega$. By the lower semicontinuity of the limit, we have
\begin{align}
d(\omega)=\inf _{\phi \in M_\omega} J_\omega(\phi) \leq J_\omega\left(\phi_0\right)=\frac{1}{3} \int_{R^3}\left|\nabla \phi_0\right|^2 d x \leq \varliminf_{k \rightarrow \infty} \frac{1}{3} \int_{R^3}\left|\nabla \phi_k\right|^2 d x=d(\omega).
\end{align}
Step3: We prove  $\phi_0$ is a solution of $-\Delta \phi+\left(1-\omega^2\right) \phi=|\phi|^{p-1} \phi \operatorname{In} |\phi|^2$.

By step2, we consider $\inf J_\omega(\phi)$       under condition $K_\omega\left(\phi\right)=0$.
Using lagrange multiplier method, we have
\begin{align}\label{10.10}
\delta J_\omega\left(\phi_0\right)=\lambda \delta K_\omega\left(\phi_0\right),
\end{align}
that is
\begin{align}
-(1-\lambda) \Delta \phi_0+(1-3\lambda )\left(1-\omega^2\right) \phi_0-(1-3\lambda )\left|\phi_0\right|^p \operatorname{In}\left|\phi_0\right|^2=0.
\end{align}
Let
\begin{align}
J_{1_\omega}(\phi)= & \frac{1-\lambda}{2} \int_{R^3}|\nabla \phi|^2 d x+\frac{(1-3\lambda)\left(1-\omega^2\right)}{2} \int_{R^3}|\phi|^2 d x \\
& +\frac{2(1-3\lambda)}{(p+1)^2} \int_{R^3}|\phi|^{p+1} d x-\frac{1-3\lambda }{p+1} \int_{R^3}|\phi|^{p+1} \operatorname{In} |\phi|^2 d x,
\end{align}
similar to the proof of Lemma 3.2, we obtain
\begin{align}\label{2.13}
&\frac{1}{2} \int_{R^3}\left|\nabla \phi_0\right|^2 d x+\frac{3\left(1-\omega^2\right)}{2} \int_{R^3}\left|\phi_0\right|^2 d x+\frac{6}{(p+1)^2} \int_{R^3}\left|\phi_0\right|^{p+1} d x \\
&\quad-\frac{3}{p+1} \int_{R^3}\left|\phi_0\right|^{p+1} \operatorname{In} \left|\phi_0\right|^2 d x \\
&=\frac{\lambda}{2} \int_{R^3}\left|\nabla \phi_0\right|^2 d x+\frac{9\lambda \left(1-\omega^2\right)}{2} \int_{R^3}\left|\phi_0\right|^2 d x \\
&\quad+\frac{18\lambda}{(p+1)^2} \int_{R^3}\left|\phi_0\right|^{p+1} d x-\frac{9\lambda}{p+1} \int_{R^3}\left|\phi_0\right|^{p+1} \operatorname{In} \left|\phi_0\right|^2 d x.
\end{align}
Thanks to $K_\omega\left(\phi_0\right)=0$ , we show that 3$\lambda K_\omega\left(\phi_0\right)=0$,
\begin{align}\label{2.14}
&\frac{3\lambda}{2} \int_{R^3}\left|\nabla \phi_0\right|^2 d x+\frac{9\lambda \left(1-\omega^2\right)}{2} \int_{R^3}\left|\phi_0\right|^2 d x \\
&+\frac{18\lambda}{(p+1)^2} \int_{R^3}\left|\phi_0\right|^{p+1} d x-\frac{9\lambda}{p+1} \int_{R^3}\left|\phi_0\right|^{p+1} \operatorname{In} \left|\phi_0\right|^2 d x=0.
\end{align}
From (3.32)--(3.36), we have
\begin{align*}
-\lambda \int_{R^3}\left|\nabla \phi_0\right|^2 d x=0.
\end{align*}
It implies $\lambda=0$, from (3.28) we obtain $J_\omega\left(\phi_0\right)=0$,  therefore $\phi_0$ is a solution of $-\Delta \phi+\left(1-\omega^2\right) \phi=|\phi|^{p-1} \phi \operatorname{In} |\phi|^2$. This completes the proof of Theorem 2.1.$\hfill\Box$\\

From Theorem 2.1, we get the following corollary.
\begin{corollary}\label{cor2.2}	
Let $G_\omega$ be the solution set of (2.4), $G_\omega$  is also the solution set of problem (3.38),
\begin{equation}\label{10.5}
\inf J_\omega(\phi)=d(\omega), \quad \omega \in(0,1), \quad \frac{1}{3} \int_{R^3}|\nabla \phi|^2 d x=d(\omega).
\end{equation}
\end{corollary}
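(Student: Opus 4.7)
The plan is to reduce the statement to a single algebraic identity linking $J_\omega$ and $K_\omega$. A direct coefficient-by-coefficient comparison shows that for every $\phi \in H_r^1$,
\begin{equation*}
J_\omega(\phi) - \frac{1}{3}K_\omega(\phi) = \frac{1}{3}\int_{R^3}|\nabla \phi|^2 \, dx,
\end{equation*}
since the mass term and the two $|\phi|^{p+1}$-terms of $K_\omega$ are exactly three times those of $J_\omega$ and cancel on subtraction, leaving only $\bigl(\frac{1}{2}-\frac{1}{6}\bigr)\int|\nabla\phi|^2\,dx$. This identity already appears implicitly in the last display of Step 2 in the proof of Theorem 2.1, where $J_\omega(\phi_0)=\frac{1}{3}\int|\nabla\phi_0|^2\,dx$ is used for $\phi_0 \in M_\omega$.

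Using this identity, the forward inclusion is immediate: if $\phi \in G_\omega$ then $\phi \in M_\omega$, so $K_\omega(\phi)=0$ and $J_\omega(\phi)=d(\omega)$, whence $\frac{1}{3}\int|\nabla\phi|^2\,dx=d(\omega)$, which is exactly the characterization in (3.38). For the reverse inclusion, suppose $\phi$ satisfies $J_\omega(\phi)=d(\omega)$ and $\frac{1}{3}\int|\nabla\phi|^2\,dx=d(\omega)$; substituting both equalities into the identity yields $K_\omega(\phi)=0$. It remains to verify $\phi \ne 0$, which reduces to showing $d(\omega)>0$: Theorem 2.1 produces a nontrivial minimizer $\phi_0$, and since $\omega \in (0,1)$ gives $1-\omega^2 > 0$, this minimizer cannot be the zero function, so $d(\omega)=\frac{1}{3}\int|\nabla\phi_0|^2\,dx>0$. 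Therefore $\phi \in M_\omega$ with $J_\omega(\phi)=d(\omega)=\inf_{M_\omega}J_\omega$, i.e., $\phi \in G_\omega$.

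The main (minor) obstacle is the strict positivity $d(\omega)>0$, which underpins the nontriviality of any $\phi$ satisfying (3.38); once this is pinned down—either directly from the existence part of Theorem 2.1 or from the Gagliardo--Nirenberg estimates used in its Step 2—both inclusions follow at once from the displayed algebraic identity, and the remainder of the argument is bookkeeping. No new compactness or scaling work is required beyond what Theorem 2.1 already supplies.
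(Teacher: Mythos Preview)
Your route is genuinely different from the paper's and is more elementary. You reduce everything to the single identity
\[
J_\omega(\phi)-\tfrac{1}{3}K_\omega(\phi)=\tfrac{1}{3}\int_{R^3}|\nabla\phi|^2\,dx,
\]
so that as soon as a function satisfies both $J_\omega(\phi)=d(\omega)$ and $\tfrac{1}{3}\int|\nabla\phi|^2=d(\omega)$ you read off $K_\omega(\phi)=0$ directly. The paper instead interprets (3.38) as a constrained minimization, writes the Euler--Lagrange relation $\delta J_\omega(\phi)+\lambda\,\delta\int|\nabla\phi|^2=0$, applies a Pohozaev-type computation (in the style of Lemma~3.2) to the perturbed functional $J_{2\omega}$, and combines the resulting identity with the constraint to force $\lambda=0$ and hence $K_\omega(\phi)=0$. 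Your shortcut bypasses the multiplier machinery entirely and is cleaner.

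There is one point you have not covered. In the paper (3.38) is a constrained variational problem, and the clause ``$\inf J_\omega(\phi)=d(\omega)$'' is a \emph{claim about the value of the constrained infimum}, not a hypothesis imposed on $\phi$. Your forward inclusion shows any $\phi\in G_\omega$ lies on the constraint with $J_\omega(\phi)=d(\omega)$, so $\inf\le d(\omega)$; but you never verify $\inf\ge d(\omega)$, which is what justifies assuming $J_\omega(\phi)=d(\omega)$ for an arbitrary minimizer in your reverse inclusion. The paper handles this first, by contradiction: if $\tfrac{1}{3}\int|\nabla\phi|^2=d(\omega)$ yet $J_\omega(\phi)<d(\omega)$, your identity gives $K_\omega(\phi)<0$, and the scaling $\psi_\beta=\phi(\cdot/\beta)$ from Step~1 of Theorem~2.1 then produces $\psi_{\beta^*}\in M_\omega$ with $\tfrac{1}{3}\int|\nabla\psi_{\beta^*}|^2<d(\omega)$, contradicting the definition of $d(\omega)$. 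Insert this short paragraph and your argument is complete and tighter than the paper's.
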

\begin{proof}
Assuming that $\phi$ satisfies
\begin{align}\label{10.6}
\frac{1}{3} \int_{R^3}|\nabla \phi|^2 d x=d(\omega), \quad J_\omega(\phi)<d(\omega),
\end{align}
then
\begin{align*}
\frac{1}{3} K_\omega(\phi)=J_\omega(\phi)-\frac{1}{3} \int_{R^3}|\nabla \phi|^2 d x=J_\omega(\phi)-d(\omega)<0.
\end{align*}
According to Theorem 2.1,
\begin{align*}
d(\omega)=\inf \left\{\frac{1}{3} \int_{R^3}|\nabla \phi|^2 d x, K_\omega(\phi) \leq 0, \phi \neq 0\right\},
\end{align*}
we obtain $d(\omega)<\frac{1}{3} \int_{R^3}|\nabla \phi|^2 d x$,  which contradicts to (3.39), then
\begin{align*}
\inf J_\omega(\phi)=d(\omega), \quad \frac{1}{3} \int_{R^3}|\nabla \phi|^2 d x=d(\omega).
\end{align*}
Next, we prove that $G_\omega$ is also the solution set of problem (3.38). Let $\phi$  be the solution to problem (3.38), then
\begin{align}\label{10.7}
\delta J_\omega(\phi)+\lambda \delta \int_{R^3}|\nabla \phi|^2 d x=0.
\end{align}
We calculate the (3.40) and obtain
\begin{align*}
\int_{R^3}-(1+\lambda) \Delta \phi+\left(1-\omega^2\right) \phi-|\phi|^{p-1} \phi \operatorname{In}|\phi|^2 d x=0.
\end{align*}
Let
\begin{align}
J_{2 \omega}(\phi)= & \frac{1+\lambda}{2} \int_{R^3}|\nabla \phi|^2 d x+\frac{1-\omega^2}{2} \int_{R^3}|\phi|^2 d x \\
& +\frac{2}{(p+1)^2} \int_{R^3}|\phi|^{p+1} d x-\frac{1}{p+1} \int_{R^3}|\phi|^{p+1} \operatorname{In}|\phi|^2 d x,
\end{align}
similar to the proof of Lemma 3.2, we obtain
\begin{align}\label{10.8}
& \frac{1+\lambda}{2} \int_{R^3}|\nabla \phi|^2 d x+\frac{3\left(1-\omega^2\right)}{2} \int_{R^3}|\phi|^2 d x \\
& +\frac{6}{(p+1)^2} \int_{R^3}|\phi|^{p+1} d x-\frac{3}{p+1} \int_{R^3}|\phi|^{p+1} \mathrm{In}|\phi|^2 d x=0.
\end{align}
According to (3.43) and the definition of $J_{\omega}(\phi)$, we get
\begin{align*}
J_\omega(\phi)=\frac{2-\lambda}{6} \int_{R^3}|\nabla \phi|^2 d x=d(\omega)=\frac{1}{3} \int_{R^3}|\nabla \phi|^2 d x,
\end{align*}
therefore $\lambda=0$. From (3.43), we obtain $K_\omega(\phi)=0$, $\phi \in G_\omega$. This completes the proof of the Corollary 3.1.
\end{proof}

%
%
%
%
%

\section{The Instability of Standing Wave Solutions}
\setcounter{section}{4}\setcounter{equation}{0}
In this section, we consider the Cauchy problem for equation (1.1),
and define the energy of equation (1.1) as
\begin{align*}
E\left(u, u_t\right)=\frac{1}{2} \int_{R^3}\left|u_t\right|^2 d x+J_0(u).
\end{align*}
where \begin{align*}
J_0(\phi)=\frac{1}{2} \int_{R^3}|\nabla \phi|^2 d x+\frac{1}{2} \int_{R^3}|\phi|^2 d x+\frac{2}{(p+1)^2} \int_{R^3}|\phi|^{p+1} d x-\frac{1}{p+1} \int_{R^3}|\phi|^{p+1} \operatorname{In} |\phi|^2 dx.
\end{align*}
Fristly, we give the following lemmas.
\begin{lemma}\label{lem5.1}
If the solutions of Cauchy problem (1.1)--(1.2) satisfy $E\left(u_0, u_1\right)=E\left(u, u_t\right)$,
then $R_1$ is an invariant set under the flow of problems (1.1)--(1.2).
\end{lemma}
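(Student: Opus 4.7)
The plan is to show that every defining condition of $R_1$ is preserved along trajectories starting from $R_1$. The energy bound $E(u(t),u_t(t))<d(0)$ is immediate from the conservation hypothesis together with $(u_0,u_1)\in R_1$. The nondegeneracy condition $u(t)\neq 0$ then follows automatically from $K_0(u(t))<0$, since $K_0(0)=0$. The regularity statements $u(t)\in X$ and $u_t(t)\in L_r^p$ are absorbed into the local well-posedness class plus the fact that $\int G(|u|)dx$ is controlled by $\|u\|_{H_r^1}$ through Sobolev embedding. Hence the lemma reduces to propagating the strict inequality $K_0(u(t))<0$ on the maximal existence interval $[0,T_{\max})$.

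For this propagation I would argue by continuity and contradiction. Using $u\in C([0,T_{\max});H_r^1)$, the map $t\mapsto K_0(u(t))$ is continuous; the only delicate term, $\int |u|^{p+1}\ln|u|^2\, dx$, is continuous thanks to the bound $|u|^{p+1}|\ln|u|^2|\leq C(|u|^{p}+|u|^{p+2})$ combined with the Sobolev embedding $H_r^1\hookrightarrow L^q$ for $2<q<6$. Suppose the set $\{t>0:K_0(u(t))\geq 0\}$ were nonempty and let $t^*$ be its infimum, so that $K_0(u(t^*))=0$ and $K_0(u(t))<0$ on $[0,t^*)$. If $u(t^*)\neq 0$, then $u(t^*)\in M_0$, so by the variational characterization from Theorem 2.1 we have $J_0(u(t^*))\geq d(0)$, and energy conservation yields
\begin{align*}
E(u_0,u_1)=E(u(t^*),u_t(t^*))=\tfrac{1}{2}\int_{R^3}|u_t(t^*)|^2\,dx+J_0(u(t^*))\geq d(0),
\end{align*}
contradicting $E(u_0,u_1)<d(0)$. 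If instead $u(t^*)=0$, then $\|u(t)\|_{H_r^1}\to 0$ as $t\to t^*$, and a direct estimate using Gagliardo--Nirenberg (2.2) and the logarithmic bound above shows that the quadratic part $\tfrac{1}{2}\|\nabla u\|_{L_r^2}^2+\tfrac{3}{2}\|u\|_{L_r^2}^2$ dominates the remaining terms of $K_0$ whenever $\|u\|_{H_r^1}$ is sufficiently small, so $K_0(u)>0$ in a small $H_r^1$-ball around $0$. Continuity then forces $K_0(u(t))>0$ for $t$ slightly less than $t^*$, contradicting the definition of $t^*$.

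The main obstacle is the careful bookkeeping around the logarithmic nonlinearity, both for continuity of $K_0(u(t))$ and for the small-$u$ lower bound that rules out the case $u(t^*)=0$. The elementary inequality $u\ln u^2\leq C(1+u^2)$ used already in the proof of Theorem 2.1, combined with the Gagliardo--Nirenberg and Sobolev embeddings into $L^{p+1}$ and $L^{p+2}$, should be sufficient to close these estimates; once they are in place the argument is the standard invariant-set mechanism driven by energy conservation and the variational characterization of $d(0)$.
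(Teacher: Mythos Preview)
Your argument is correct and follows the same invariant-set mechanism as the paper: energy conservation preserves $E<d(0)$, and a continuity argument shows that $K_0(u(t))$ cannot reach zero without contradicting the variational level $d(0)$. The only organizational difference is in how the contradiction is extracted. The paper works through the equivalent characterization $d(0)=\widetilde{d(0)}=\inf\bigl\{\tfrac{1}{3}\int_{R^3}|\nabla\phi|^2\,dx:\ K_0(\phi)\le 0,\ \phi\neq 0\bigr\}$ from Theorem~2.1(i), combined with the identity $J_0(u)=\tfrac{1}{3}\int_{R^3}|\nabla u|^2\,dx+\tfrac{1}{3}K_0(u)$, to bound $\tfrac{1}{3}\int_{R^3}|\nabla u(t_0)|^2\,dx$ by $E(u_0,u_1)<d(0)$ via a limit along nearby times where $K_0<0$; since $K_0<0$ forces $u\neq 0$, the degenerate possibility $u(t_0)=0$ never has to be isolated. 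Your route, invoking $u(t^*)\in M_0$ directly and then disposing of $u(t^*)=0$ by a small-$H_r^1$-norm lower bound on $K_0$, is an equally valid and arguably more transparent way to reach the same contradiction; the extra case costs you a few lines but makes the role of the logarithmic estimates explicit.
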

\begin{proof}
	Let $\left(u_0, u_1\right) \in R_1$,  according to the conservation of energy, we obtain
	\begin{align}
	E\left(u_0, u_1\right)=E\left(u, u_t\right)<d(0).
	\end{align}
Assume there exists a  $t_1$ such that $\left(u\left(t_1\right), u_t\left(t_1\right)\right) \notin R_1$, then $K\left(u\left(t_1\right)\right) \geq 0$. From the lower semi--continuity of $K(u(t))$, there is a maximum value $t_0$, which satisfies $K\left(u\left(t_0\right)\right) \geq 0$. So for $t \in\left(t_0,+\infty\right)$, we have
$K(u(t))<0$,
\begin{align}\label{5.1.4}
& \frac{1}{3} \int_{R^3}\left|\nabla u\left(t_0\right)\right|^2 d x \leq \varliminf_{\substack{t \rightarrow t_0 \\
		t<t_0}} \frac{1}{3} \int_{R^3}|\nabla u(t)|^2 d x \\
& \leq \varliminf_{\substack{t \rightarrow t_0  \\
		t<t_0}}\left(\frac{1}{3} \int_{R^3}|\nabla u(t)|^2 d x+\frac{1}{3} K(u(t))\right) \\
& =\varliminf_{\substack{t \rightarrow t_0 \\
		t<t_0}} J(u(t)) \leq \varliminf_{\substack{t \rightarrow t_0 \\
		t<t_0}}\left(J(u(t))+\frac{1}{2} \int_{R^3}\left|u_t(t)\right|^2 d x\right) \\
& =\varliminf_{\substack{t \rightarrow t_0 \\
		t<t_0}} E\left(u, u_t\right)<d(0),
\end{align}
which contradicts the definition of $d(0)$, so $R_1$ is an invariant set under the flow of Cauchy problem (1.1)--(1.2).
\end{proof}
\begin{lemma}$[34]$\label{lem5.2}
	let $u \in H_r^1$, then \begin{align*}
	|u(x)| \leq C_N|x|^{\frac{1-N}{2}}\|u\|_{H_r^1},
	\end{align*}
	where, $C_N$ only depends on $N$.
\end{lemma}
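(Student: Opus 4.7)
The plan is to establish this classical Strauss radial decay estimate by a one--dimensional fundamental--theorem--of--calculus computation on the radial profile, combined with a density argument to pass from smooth compactly supported radial functions to arbitrary $u\in H_r^1$.

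First I would assume $u\in C_c^\infty(R^N)$ is radial, write $u(x)=u(r)$ with $r=|x|$, and compute
\begin{align*}
\frac{d}{dr}\bigl(r^{N-1}u(r)^2\bigr)=(N-1)r^{N-2}u(r)^2+2r^{N-1}u(r)u'(r).
\end{align*}
Integrating from $r$ to $\infty$ and using that $u$ has compact support, I would obtain
\begin{align*}
r^{N-1}u(r)^2=-\int_r^\infty (N-1)s^{N-2}u(s)^2\,ds-2\int_r^\infty s^{N-1}u(s)u'(s)\,ds.
\end{align*}
The first integral on the right--hand side is non--negative, so dropping it yields
\begin{align*}
r^{N-1}u(r)^2\leq 2\int_r^\infty s^{N-1}|u(s)|\,|u'(s)|\,ds.
\end{align*}

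Next I would apply Cauchy--Schwarz to the right--hand side and convert the resulting one--dimensional integrals into integrals over $R^N$. For any radial $v$ one has $\int_{R^N}|v|^2\,dx=\omega_{N-1}\int_0^\infty s^{N-1}|v(s)|^2\,ds$, where $\omega_{N-1}$ denotes the surface measure of the unit sphere in $R^N$; moreover for radial $u$ one has $|\nabla u(x)|=|u'(r)|$ pointwise. Combining these with $2ab\leq a^2+b^2$ yields
\begin{align*}
r^{N-1}u(r)^2\leq \frac{2}{\omega_{N-1}}\|u\|_{L^2(R^N)}\|\nabla u\|_{L^2(R^N)}\leq \frac{1}{\omega_{N-1}}\|u\|_{H_r^1}^2.
\end{align*}
Taking square roots produces the stated bound with $C_N=\omega_{N-1}^{-1/2}$.

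Finally, to remove the smoothness assumption I would invoke density of $C_c^\infty(R^N)\cap H_r^1$ in $H_r^1$. Choosing a sequence of smooth radial compactly supported $u_k\to u$ in $H_r^1$, the uniform bound above forces $\{u_k\}$ to be Cauchy in $L^\infty_{\mathrm{loc}}(R^N\setminus\{0\})$, which produces a continuous representative of $u$ on $R^N\setminus\{0\}$ satisfying the inequality. The main obstacle I anticipate is purely technical bookkeeping: confirming that the boundary term at infinity indeed vanishes for the approximating sequence and that the continuous representative identified by the bound agrees with $u$ almost everywhere on $R^N\setminus\{0\}$. The algebraic core is elementary, resting on a single differentiation identity and the Cauchy--Schwarz inequality.
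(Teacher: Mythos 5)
Your proof is correct: the paper offers no argument of its own for this lemma, importing it directly from Strauss [34], and your computation --- the derivative identity for $r^{N-1}u(r)^2$, integration from $r$ to $\infty$ with the vanishing boundary term, dropping the nonnegative term, Cauchy--Schwarz with $2ab\leq a^2+b^2$ to reach $C_N=\omega_{N-1}^{-1/2}$, and the density argument giving a continuous representative on $R^N\setminus\{0\}$ --- is precisely the classical proof of the Strauss radial lemma found in that reference. Nothing to add; the technical points you flag (boundary term for compactly supported approximants, identification of the representative via the estimate applied to differences $u_k-u_j$) go through exactly as you anticipate.
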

\begin{lemma}\label{lem5.3}
Let \begin{align*}
\phi_m(r, t) & =r H(m-r)+\frac{\operatorname{In}(m) r}{\operatorname{In}(r)}[H(r-m)-H(r-(t+2 m))] \\
&\ \  -\frac{\operatorname{In}(m)(t+2 m)}{\operatorname{In}(t+2 m)}[H(r-(t+2 m))],
\end{align*}
where, if $s \geq 0$, $H(s)=1$, if $s<0$, $H(s)=0$. Then we obtain
\begin{align}\label{5.1.6}
& \operatorname{Re} \int_{R^3} u_t \phi_m \bar{u}_r d x-\operatorname{Re} \int_{R^3} u_t(0) \phi_m(r, 0) \bar{u}_r(0) d x \\
& =\int_0^t \int_{r<m}-\frac{3}{2}\left|u_t\right|^2+\frac{1}{2}|\nabla u|^2+\frac{3}{2}|u|^2+3 G(|u|) d x d s \\
& \quad+\frac{\operatorname{In} m}{\operatorname{In} r} \int_0^t \int_{m<r<2 m+t}-\frac{3}{2}\left|u_t\right|^2+\frac{1}{2}|\nabla u|^2+\frac{3}{2}|u|^2+3 G(|u|) d x d s \\
& \quad+\frac{\operatorname{In} m}{(\operatorname{In} r)^2} \int_0^t \int_{m<r<2 m+t} \frac{1}{2}\left|u_t\right|^2+\frac{1}{2}|\nabla u|^2-\frac{1}{2}|u|^2-G(|u|) d x d s \\
& \quad+\int_0^t \int_{r>2 m+t}-\operatorname{Re} e_{1 m} \bar{u}_r u_t+e_{2 m}\left[-\frac{1}{2}\left|u_t\right|^2+|\nabla u|^2-|u|^2-2 G(|u|)\right] d x d s .
\end{align}
\end{lemma}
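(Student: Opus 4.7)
The plan is to read the identity as a localized virial (Morawetz-type) identity obtained by multiplying equation \eqref{1.1} by the radial multiplier $\phi_m(r,t)\,\bar u_r$, taking real parts, and integrating over $R^3\times[0,t]$. Because $\phi_m$ is piecewise smooth, with three pieces matched to the cutoffs $H(m-r)$, $H(r-m)-H(r-(t+2m))$, and $H(r-(t+2m))$, the asserted identity is the aggregate of three region-wise virial identities, one for each piece, glued together at the interfaces.

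For the time piece I would integrate by parts in $s$ on $\int_0^t\!\int\operatorname{Re}(u_{tt}\phi_m\bar u_r)\,dx\,ds$, which produces the boundary term $[\operatorname{Re}\int u_t\phi_m\bar u_r\,dx]_0^t$, a $\partial_t\phi_m$ contribution (nontrivial only on $r>m$), and a symmetric piece $-\int\operatorname{Re}(u_t\phi_m\bar u_{rt})\,dx$ that, after integration by parts in $r$ using the radial measure $dx=4\pi r^2\,dr$, becomes $\tfrac12\int\bigl(\phi_m'+\tfrac{2\phi_m}{r}\bigr)|u_t|^2\,dx$. For the spatial part I would substitute $\Delta u=u_{rr}+\tfrac{2}{r}u_r$ on radial functions and reduce each of $u_{rr}\bar u_r$, $u\bar u_r$, $|u|^{p-1}u(\ln|u|^2)\bar u_r$ to a total $r$-derivative, namely $\tfrac12\partial_r|u_r|^2$, $\tfrac12\partial_r|u|^2$, and $-\partial_r G(|u|)$ respectively (the last using $G'(s)=-s^p\ln s^2$, which is immediate from the definition of $G$). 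Integrating each against $\phi_m$ with the $r^2$ Jacobian converts the coefficients into $\tfrac12\phi_m'-\tfrac{\phi_m}{r}$ on $|\nabla u|^2$, $-\tfrac12(\phi_m'+\tfrac{2\phi_m}{r})$ on $|u|^2$, and $\phi_m'+\tfrac{2\phi_m}{r}$ on $G(|u|)$.

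It then remains to substitute the explicit form of $\phi_m$ in each region and collect terms. On $r<m$ one has $\phi_m=r$ so $\phi_m'=1$ and $\phi_m/r=1$; the coefficients reduce to $-\tfrac32,\tfrac12,\tfrac32,3$ on $|u_t|^2,|\nabla u|^2,|u|^2,G(|u|)$, matching the first block on the right. On $m<r<2m+t$ one has $\phi_m/r=\ln m/\ln r$ and $\phi_m'=\ln m/\ln r-\ln m/(\ln r)^2$, so sorting the resulting weights by the two independent factors $\ln m/\ln r$ and $\ln m/(\ln r)^2$ reproduces the two middle blocks exactly; the indicated signs $-\tfrac32,\tfrac12,\tfrac32,3$ and $\tfrac12,\tfrac12,-\tfrac12,-1$ emerge as the image of the algebraic identity $\tfrac32(A)-\tfrac12(B)=\tfrac12 A+\tfrac12 B$ applied to the two weights. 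On $r>2m+t$ the weight $\phi_m$ is independent of $r$, so $\phi_m'=0$ and only the $\tfrac{2\phi_m}{r}$ and $\partial_t\phi_m$ contributions survive; these are exactly what the lemma packages into $e_{1m}$ and $e_{2m}$.

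The main technical obstacle will be the careful treatment of the interfaces $r=m$ and $r=2m+t$, where the Heaviside factors make $\phi_m'$ include Dirac contributions. At $r=m$ both pieces of $\phi_m(\cdot,t)$ equal $m$, so $\phi_m$ is continuous there and the induced surface terms from integration by parts cancel across the interface; at the outer boundary $r=2m+t$ the residual flux, together with the moving-front velocity encoded in $\partial_t\phi_m$, is precisely what is absorbed into the definitions of $e_{1m}$ and $e_{2m}$. A parallel concern is regularity: the pointwise identity formally requires $u\in H^2$ in space, so I would first establish it for a smooth approximation of $u$ and pass to the limit using the energy bound and the compact embedding from Lemma~\ref{lem2.1}.
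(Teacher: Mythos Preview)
Your proposal is correct and follows exactly the same route as the paper: multiply \eqref{1.1} by $\phi_m(r,t)\bar u_r$, take real parts, integrate over $R^3$, and then integrate in $t$; the paper's proof literally just states these steps and says ``we calculate,'' while you supply the region-by-region computation of the weights $\phi_m'$ and $\phi_m/r$ and the interface and regularity remarks that the paper omits. The only cosmetic slip is the sentence about ``the algebraic identity $\tfrac32(A)-\tfrac12(B)=\tfrac12 A+\tfrac12 B$,'' which is not an identity and should simply read that you split the middle-region coefficients according to the two independent factors $\ln m/\ln r$ and $\ln m/(\ln r)^2$.
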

\begin{proof}
We multiply equation (1.1) by $\phi_m(r, t) \bar{u}_r$, then integrate on $R^3$, take the real part, we obtain
\begin{align}\label{5.1.5}
& \frac{d}{d t} \operatorname{Re} \int_{R^3} u_t \phi_m \bar{u}_r d x-\operatorname{Re} \int_{R^3} u_t \frac{d}{d t}\left(\phi_m\right) \bar{u}_r d x-\operatorname{Re} \int_{R^3} u_t \phi_m \frac{d}{d t}\left(\bar{u}_r\right) d x \\
& -\operatorname{Re} \int_{R^3} \Delta u \phi_m \bar{u}_r d x+\operatorname{Re} \int_{R^3} u \phi_m \bar{u}_r d x+\operatorname{Re} \int_{R^3} f(|u|) \frac{u}{|u|} \phi_m \bar{u}_r d x=0,
\end{align}
where, $f(|u|)=-|u|^p \operatorname{In}|u|^2$.

 Let $G^{\prime}(|u|)=f(|u|)$,
we  calculate equation (4.11) and obtain
\begin{align*}
\frac{d}{d t} \operatorname{Re} & \int_{R^3} u_t \phi_m \bar{u}_r d x+\int_{r<m} \frac{3}{2}\left|u_t\right|^2-\frac{1}{2}|\nabla u|^2-\frac{3}{2}|u|^2-3 G(|u|) d x \\
& +\frac{\operatorname{In} m}{\operatorname{In} r}\left[\int_{m<r<2 m+t} \frac{3}{2}\left|u_t\right|^2-\frac{1}{2}|\nabla u|^2-\frac{3}{2}|u|^2-3 G(|u|) d x\right] \\
& -\frac{\operatorname{In} m}{(\operatorname{In} r)^2}\left[\int_{m<r<2 m+t} \frac{1}{2}\left|u_t\right|^2+\frac{1}{2}|\nabla u|^2-\frac{1}{2}|u|^2-G(|u|) d x\right] \\
& +\int_{r>2 m+t} \operatorname{Re} e_{1 m} \bar{u}_r  u_t+e_{2 m}\left[\frac{1}{2}\left|u_t\right|^2-|\nabla u|^2+|u|^2+2 G(|u|)\right] d x=0 ,
\end{align*}
where $e_{i m}$ $(i=1,2)$  is a constant and $\left|e_{i m}\right| \leq C_{17}$ for $r>2 m+t$. Integrating with respect to $t$ we obtain (4.6).

\end{proof}
\noindent {\bf Proof of Theorem 2.2.} We prove Theorem 2.2  by contradiction. Statement (i) is just a consequence of the local existence of the solution. Now we proof (ii). Assuming $u(t, x)$ is bounded in $H_r^1\left(R^3\right)$, then there exists an $\varepsilon>0$ such that $K_0(u(t))<-\varepsilon$ for any $t$. Otherwise, since $R_1$ is an invariant set, there exists an sequence $\left\{t_k\right\}$ that satisfies $K_0\left(u\left(t_k\right)\right) \rightarrow 0$, so there is a convergent subsequence $u\left(t_k\right)$,
\begin{align*}
u\left(t_k\right) \rightarrow u^* \text { weakly in } H_r^1\left(R^3\right),
\end{align*}
from Lemma 3.1, we have
\begin{align*}
u\left(t_k\right) \rightarrow u^* \text { strongly in } L_r^p\left(R^3\right), \quad 2<p<6,
\end{align*}
then we get
\begin{align}
\frac{1}{3} \int_{R^3}\left|\nabla u^*\right|^2 d x & \leq \varliminf_{k \rightarrow \infty} \frac{1}{3} \int_{R^3}\left|\nabla u\left(t_k\right)\right|^2 d x \\
& =\varliminf_{k \rightarrow \infty}\left[J_0\left(u\left(t_k\right)\right)-\frac{1}{3} K\left(u\left(t_k\right)\right)\right] \\
& =\varliminf_{k \rightarrow \infty} J_0\left(u\left(t_k\right)\right) \\
& \leq \varliminf_{k \rightarrow \infty}\left[J_0\left(u\left(t_k\right)\right)+\frac{1}{2} \int_{R^3}\left|u_t\left(t_k\right)\right|^2 d x\right] \\
& =E\left(u_0, u_1\right)<d,
\end{align}
which contradicts the definition of $d(0)$. Therefore such an $\varepsilon>0$ exists.
  According to Lemma 4.2, we can get that
  \begin{align*}
  |u(r, t)| \leq C_N\|u(t)\|_{H_r^1} r^{-1}<C_{18} r^{-1},
  \end{align*}
when $r$ is sufficiently large,  we have $|u(r, t)| \leq 1$, so \begin{align*}
G(|u|)=\frac{2}{(p+1)^2}|u|^{p+1}-\frac{1}{(p+1)}|u|^{p+1} \operatorname{In}|u|^2>0.
\end{align*}
We estimate the (4.6) and  obtain
\begin{align}\label{5.1.7}
\int_{m<r<2 m+t} \frac{\operatorname{In} m}{2 \operatorname{In} r}|\nabla u|^2 d x&+\int_{m<r<2 m+t} \frac{\operatorname{In} m}{2(\operatorname{In} r)^2}|\nabla u|^2 d x \\
&<\frac{1}{2} \int_{m<r<2 m+t}|\nabla u|^2 d x+\frac{1}{2 \operatorname{In} m} \int_{m<r<2 m+t}|\nabla u|^2 d x,
\end{align}
\begin{align}
\int_{m<r<2 m+t} \frac{3 \operatorname{In} m}{2 \operatorname{In} r}|u|^2 d x-\int_{m<r<2 m+t} \frac{\operatorname{In} m}{2(\operatorname{In} r)^2}|u|^2 d x<\frac{3}{2} \int_{m<r<2 m+t}|u|^2 d x ,
\end{align}
\begin{align}\label{5.1.8}
\int_{m<r<2 m+t} \frac{3 \operatorname{In} m}{\operatorname{In} r} G(|u|) d x-\int_{m<r<2 m+t} \frac{\operatorname{In} m}{(\operatorname{In} r)^2} G(|u|) d x<3 \int_{m<r<2 m+t} G(|u|) d x.
\end{align}
Finally, since the radially symmetric solution is strong outside the optical cone containing the origin, at the base of the optical cone, we have the following estimates,
\begin{align}
 \int_{r>2 m+t}-\operatorname{Re} e_{1 m} \bar{u}_r u_t&+e_{2 m}\left[-\frac{1}{2}\left|u_t\right|^2+|\nabla u|^2-|u|^2-2 G(|u|)\right] d x \\
& \leq C_{19} \int_{r>2 m+t}\left|\bar{u}_r u_t\right|-\frac{1}{2}\left|u_t\right|^2+|\nabla u|^2-|u|^2-2 G(|u|) d x \\
& \leq C_{19} \int_{r>2 m+t}\left|\bar{u}_r u_t\right|+\frac{1}{2}\left|u_t\right|^2+|\nabla u|^2+|u|^2+2 G(|u|) d x \\
& \leq C_{19} \int_{r>2 m+t}\left|u_1\right|^2+\left|\nabla u_0\right|^2+\left|u_0\right|^2+2 G\left(\left|u_0\right|\right) d x .
\end{align}
From estimates (4.18)--(4.21), (4.6) can be written as
\begin{align*}
 \operatorname{Re} \int_{R^3} u_t \phi_m \bar{u}_r d x&-\operatorname{Re} \int_{R^3} u_t(0) \phi_m(r, 0) \bar{u}_r(0) d x \\
& =\int_0^t \int_{r<m}-\frac{3}{2}\left|u_t\right|^2+\frac{1}{2}|\nabla u|^2+\frac{3}{2}|u|^2+3 G(|u|) d x d s \\
& +\int_0^t \int_{m<r<2 m+t} \frac{1}{2}|\nabla u|^2+\frac{3}{2}|u|^2+3 G(|u|) d x d s \\
& +C_{19} \int_0^t \int_{r>2 m+t}\left|u_1\right|^2+\left|\nabla u_0\right|^2+\left|u_0\right|^2+2 G\left(\left|u_0\right|\right) d x d s \\
& +\int_0^t \int_{m<r<2 m+t} \frac{1}{2 \operatorname{In} m}|\nabla u|^2 d x d s \\
& =I_1+I_2+I_3+I_4 .
\end{align*}
According to $K(u(t))<-\varepsilon$, we get $I_1<0$ and
\begin{align*}
I_2 & =\int_0^t \int_{m<r<2 m+t} \frac{1}{2}|\nabla u|^2+\frac{3}{2}|u|^2+3 G(|u|) d x d s \\
& =\int_0^t K_0(u(s)) d s \leq-\varepsilon t .
\end{align*}
When $m$ is large enough, we get
\begin{align*}
I_3=C_{19} \int_0^t \int_{r>2 m+t}\left|u_1\right|^2+\left|\nabla u_0\right|^2+\left|u_0\right|^2+2 G\left(\left|u_0\right|\right) d x d s \leq \int_0^t \frac{\varepsilon}{4} d s \leq \frac{\varepsilon t}{4},
\end{align*}
\begin{align*}
I_4=\int_0^t \int_{m<r} \frac{1}{2 \operatorname{In} m}|\nabla u|^2 d x d s \leq \int_0^t \frac{\varepsilon}{4} d s \leq \frac{\varepsilon t}{4},
\end{align*}
so
\begin{align}\label{5.1.9}
\operatorname{Re} \int_{R^3} u_t \phi_m \bar{u}_r d x-\operatorname{Re} \int_{R^3} u_t(0) \phi_m(r, 0) \bar{u}_r(0) d x \leq-\frac{\varepsilon t}{2}.
\end{align}
In addition, when $t$ is sufficiently large, we can get
 \begin{align}
 \left|\phi_m(r, t)\right| \leq \frac{C_{20} t}{\operatorname{In} t}.
 \end{align}
Thus, (4.26) can be written as
\begin{align}\label{5.1.12}
\int_{R^3} \nabla u \cdot u_t d x \geq C(\varepsilon) \operatorname{In} t.
\end{align}
By H?lder's  inequality, we have
 \begin{align}\label{5.1.13}
 \left(\int_{R^3}|\nabla u|^2 d x\right)^{\frac{1}{2}} \cdot\left(\int_{R^3}\left|u_t\right|^2 d x\right)^{\frac{1}{2}} \geq C(\varepsilon) \operatorname{In} t,
 \end{align}
which contradicts the assumption that $u(t,x)$ is bounded in $H_r^1 (R^3 )$, so there is a sequence $\left\{t_n\right\}$, when $t_n \rightarrow \infty$,
$\left\|u\left(t_n\right)\right\|_{H_r^1} \rightarrow \infty$.

Take $u_0(x)=\phi\left(\frac{x}{\lambda}\right)$, $u_1(x)=0$, $\lambda>1$, where $\phi(x)$ is the ground state solution of (2.3), we obtain
\begin{align}
\left\|\left(u_0, u_1\right)-\left(\phi, 0\right)\right\|_{H_r^1 \times L_r^2}<\varepsilon.
\end{align}
Then we prove $\left(u_0, u_1\right) \in R_1$, from the proof process of Lemma 3.2, we can get that
$J_0\left(u_\lambda(x)\right)$ is a decreasing function of  $\lambda$($\lambda>1$), so
\begin{align}\label{5.1.10}
E\left(u_0, u_1\right)=E\left(u_0, 0\right)=J_0\left(u_0(x)\right)=J_0\left(\phi\left(\frac{x}{\lambda}\right)\right)<J_0(\phi(x))=d(0) ,
\end{align}
\begin{align}\label{5.1.11}
\frac{1}{3} \int_{R^3}\left|\nabla u_0\right|^2 d x=\frac{1}{3} \int_{R^3}\left|\nabla \phi\left(\frac{x}{\lambda}\right)\right|^2 d x=\frac{\lambda}{3} \int_{R^3}|\nabla \phi(x)|^2 d x>\frac{1}{3} \int_{R^3}|\nabla \phi(x)|^2 d x=d(0).
\end{align}
(4.31) and (4.32)  indicate $\left(u_0, u_1\right) \in R_1$, so we get Theorem 2.2. Then the instability of the standing wave solutions is obtained .

\section*{Acknowledgment} The first author is supported in part by the
National Science Foundation of China, grant 11971166.

\end{document}